\newcounter{c}
\newtheorem{Theo}{Theorem}
\newtheorem{Prop}[c]{Proposition}
\newtheorem{Lemma}[c]{Lemma}
\newtheorem{Cor}[c]{Corollary}
\theoremstyle{definition}
\newtheorem{Def}[c]{Definition}
\theoremstyle{remark}
\newtheorem{Ex}[c]{Example}
\newtheorem*{Remark}{Remark}
\thanks{The first author was supported by CNPq (Brazil) and the second author was supported by Fapesp, grant no. 2013/22.802-1.}
\author[E. Hitomi]{Eduardo Eizo Aramaki Hitomi}
\address{Department of Mathematics, State University of Campinas, Campinas, SP, 13083-859, Brazil.}
\email{ehitomi@ime.unicamp.br}
\author[F. Yasumura]{Felipe Yukihide Yasumura}
\address{Department of Mathematics, State University of Campinas, Campinas, SP, 13083-859, Brazil.}
\email{ra091138@ime.unicamp.br}
\subjclass[2010]{17B05, 05E18}
\keywords{Lie commutators, Permutations}
\title{On the combinatorics of commutators of Lie algebras}
\begin{document}
\begin{abstract}
	Motivated by the combinatorial properties of products in Lie algebras, we investigate the subset of permutations that naturally appears when we write the long commutator $[x_1,x_2,...,x_m]$ as a sum of associative monomials. We characterize this subset and find some useful equivalences. Moreover, we explore properties concerning the action of this subset on sequences of m elements. In particular we describe sequences that share some special symmetries which can be useful in the study of combinatorial properties in graded Lie algebras.
\end{abstract}
\maketitle

\section{Introduction.} For $m\in\mathbb{N}$, let $I_m=\{1,2,\ldots,m\}$ and $\mathcal{S}_m$ denote the set of bijections of $I_m$. We assume that the elements of $\mathcal{S}_m$ acts by left on $I_m$, that is, if $\sigma,\tau\in \mathcal{S}_m$ then $\sigma\circ\tau$ stands for applying $\tau$ first and then $\sigma$.

	We are interested in studying the subset $\mathscr{T}_m$ of $\mathcal{S}_m$, where given an associative algebra $A$ and given elements $x_1,x_2,\ldots,x_m$,
	\[
		[x_1,x_2,\ldots,x_m]=\sum_{\sigma\in\mathscr{T}_m}\pm x_{\sigma(1)}x_{\sigma(2)}\cdots x_{\sigma(m)},
	\]
	where the long commutator is left normed, that is, we define
	\begin{eqnarray*}
		[x_1,x_2]&=&x_1x_2-x_2x_1,\\{}
		[x_1,x_2,\ldots,x_m]&=&[[x_1,x_2,\ldots,x_{m-1}],x_m],\quad\text{ for $m>2$}.
	\end{eqnarray*}
	The set $\mathscr{T}_m$, its properties and relations with Lie algebras was intensely investigated (see, for instance, \cite{BL1992,G}).

	We state some equivalences of $\mathscr{T}_m$ (see Corollary \ref{prod_equiv} below), and then we will derive properties concerning actions by permutations of sequences restricted to $\mathscr{T}_m$. This theory is an important tool to study group gradings on the algebra of upper triangular matrices algebras viewed as a Lie algebra. We hope the theory developed here can be useful to other algebras.

\section{Some equivalences.} We denote by $\mathcal{S}_m$ the symmetric group permuting the symbols 1, 2, \dots, $m$. Following \cite{BL1992}, let
	\[
		\mathscr{T}_m=\left\{\sigma\in \mathcal{S}_m\mid\sigma(1)>\ldots>\sigma(t)=1,\sigma(t+1)<\ldots<\sigma(m),t=1,\ldots,m\right\}.
	\]
	There are some equivalences to define the above set:
	\begin{Lemma}[\cite{BL1992}]\label{first_lemma}
		Let $\sigma\in \mathcal{S}_m$. The following conditions are equivalent:
		\begin{enumerate}
			\item $\sigma\in\mathscr{T}_m$;
			\item there exists $r$ such that: $\sigma(j)>\sigma(j+1)$ if and only if $1\le j\le r$;
			\item there exists $j_1>j_2>\cdots>j_r>1$ such that
				\[
					\sigma=(j_r \dots 1)\cdots(j_1 \dots 1).
				\]
				Moreover, $j_i=\sigma(i)$ for $i=1,2,\ldots,r$.
		\end{enumerate}
		Also given an associative algebra $A$ and $x_1,x_2,\ldots,x_m\in A$,
		\[
			[x_1,x_2,\ldots,x_m]=\sum_{\sigma\in\mathscr{T}_m}(-1)^{\sigma^{-1}(1)-1}x_{\sigma(1)}x_{\sigma(2)}\cdots x_{\sigma(m)}.
		\]
	\end{Lemma}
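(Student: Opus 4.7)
My plan is to establish the equivalences $(1) \Leftrightarrow (2)$ and $(1) \Leftrightarrow (3)$ as essentially direct reformulations, and then to derive the commutator expansion by induction on $m$.

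For $(1) \Leftrightarrow (2)$, the key observation is that a permutation whose descent set is exactly $\{1, \ldots, r\}$ has $\sigma(r+1)$ strictly smaller than every other value, because everything to its left descends down to it and everything to its right ascends away from it. Hence $\sigma(r+1) = 1$, and one takes $t = r+1$; the converse reads off the descents from the $V$-shape in~(1).

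For $(1) \Leftrightarrow (3)$, I would argue by induction on $r$ that the product
\[
\pi_r = (j_r, \ldots, 1)\cdots(j_1, \ldots, 1)
\]
satisfies $\pi_r(i) = j_i$ for $1 \le i \le r$, $\pi_r(r+1) = 1$, and that $\pi_r(r+2), \ldots, \pi_r(m)$ list the remaining elements of $I_m$ in increasing order. The inductive step relies on the observation that, since $j_1 > j_2 > \cdots > j_r > 1$, each cycle $c_k = (j_k, \ldots, 1)$ fixes every value strictly greater than $j_k$. Thus applying $c_r$ to $\pi_{r-1}$ leaves $j_1, \ldots, j_{r-1}$ in the first $r-1$ positions untouched (all exceed $j_r$), sends the $1$ at position $r$ to $j_r$, and extracts $j_r$ from the increasing tail while shifting the values below $j_r$ one step to the right. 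Starting from $\sigma$ satisfying~(1), the choice $j_i = \sigma(i)$ for $i \le r$ then produces the factorisation.

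For the commutator formula, I would induct on $m$, with the case $m=2$ being immediate. For the step, unfold
\[
[x_1, \ldots, x_m] = [x_1, \ldots, x_{m-1}]\,x_m - x_m\,[x_1, \ldots, x_{m-1}]
\]
and apply the inductive hypothesis to each factor. The crux is a bijection between $\mathscr{T}_m$ and two disjoint copies of $\mathscr{T}_{m-1}$, according to whether $\tau(m) = m$ or $\tau(1) = m$: since $m$ is the largest symbol, condition~(1) forces it to sit at one of the two ends of the $V$-shape. If $\tau(m) = m$, deleting the last entry yields $\sigma \in \mathscr{T}_{m-1}$ with $\tau^{-1}(1) = \sigma^{-1}(1)$, matching the contribution from $[x_1, \ldots, x_{m-1}]\,x_m$. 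If $\tau(1) = m$, deleting the first entry yields $\sigma \in \mathscr{T}_{m-1}$ with $\tau^{-1}(1) = \sigma^{-1}(1) + 1$, and the extra sign flip is absorbed precisely by the minus in front of $x_m\,[x_1, \ldots, x_{m-1}]$.

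The main obstacle is the bookkeeping in the induction for $(1) \Leftrightarrow (3)$: confirming rigorously that applying $c_r$ correctly inserts $j_r$ at position $r$ while re-indexing the increasing tail. The remaining arguments are routine once the correct extremal positions of $m$ (for the commutator recursion) and of $1$ (for the descent-set reformulation) have been identified.
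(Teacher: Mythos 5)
Your argument is correct. Note that the paper itself offers no proof of this lemma; it is quoted from Blessenohl--Laue \cite{BL1992}, so there is no internal argument to compare yours against. All three of your steps check out: the descent-set reformulation correctly identifies $\sigma(r+1)=1$ as the forced minimum of the $V$-shape; the induction on $r$ for the cycle factorisation works because the cycle $(j_r,\ldots,1)$ fixes all values exceeding $j_r$, sends $1$ to $j_r$, and acts in an order-preserving way on $\{2,\ldots,m\}$, so the tail remains increasing and its new minimum is $1$ (your ``bookkeeping'' concern is resolved exactly by this order-preservation); and the recursion for the long commutator correctly splits $\mathscr{T}_m$ into the two disjoint copies of $\mathscr{T}_{m-1}$ according to whether $m$ sits at position $m$ or position $1$, with the sign $(-1)^{\sigma^{-1}(1)-1}$ shifting by exactly the factor $-1$ in the second case. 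The only implicit step worth making explicit is that an element of $\mathscr{T}_m$ is determined by its first $r$ values (the remainder must be $1$ followed by the increasing enumeration of the complement), which is what guarantees that the product of cycles built from $j_i=\sigma(i)$ actually equals $\sigma$ and not merely some other element of $\mathscr{T}_m$ with the same initial segment.
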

	The last assertion of the previous lemma says that the set $\mathscr{T}_m$ is indeed the set we want to study.

	If we write the elements of $\mathcal{S}_m$ in two row notation, like
	\[
		\sigma=\left(\begin{array}{cccc}1&2&\ldots&m\\\sigma(1)&\sigma(2)&\ldots&\sigma(m)\end{array}\right),
	\]
	then we can easily recognize if $\sigma$ is an element of $\mathscr{T}_m$ or not, by definition. Also, it is easy to see that for every $r=1,2,\ldots,m$, the numbers $1,2,\ldots,r$ in the second row appear together, in ``only one block".

We draw the reader's attention that in general $\mathscr{T}_m$ is not even a subsemigroup of $\mathcal{S}_m$.
\begin{Ex} Consider the  permutations
			\[
				\sigma_1=\left(\begin{array}{ccc}1&2&3\\2&1&3\end{array}\right),\quad\sigma_2=\left(\begin{array}{ccc}1&2&3\\3&1&2\end{array}\right),\quad\sigma_2\circ\sigma_1=\left(\begin{array}{ccc}1&2&3\\1&3&2\end{array}\right).
			\]
			Then, using remark above, $\sigma_1$, $\sigma_2\in\mathscr{T}_3$ but $\sigma_2\circ\sigma_1\notin\mathscr{T}_3$. Hence $\mathscr{T}_m$ is not in general a subgroup of $\mathcal{S}_m$.
\end{Ex}

	It is not hard to derive the following equivalence:
	\begin{Lemma}\label{originalequivalence}
		Let $\sigma\in \mathcal{S}_m$. Then $\sigma\in\mathscr{T}_m$ if and only if $\sigma$ satisfies the following condition: there exists an integer $t$, with $1\le t\le m$, such that
	\begin{enumerate}
		\renewcommand{\labelenumi}{(\roman{enumi})}
		\item $\sigma(t)=1$;
		\item for every positive integers $k_1$, $k_2\ge0$ such that $k_1+k_2<m$ and
			\[
				\{\sigma(t-k_1),\sigma(t-k_1+1),\ldots,\sigma(t+k_2)\}=\{1,2,\cdots,k_1+k_2+1\},
			\]
			it holds that either
			\begin{itemize}
				\item $t-k_1-1\ge 1$ and $\sigma(t-k_1-1)=k_1+k_2+2$ or
				\item $t+k_2+1\le m$ and $\sigma(t+k_2+1)=k_1+k_2+2$.
			\end{itemize}
	\end{enumerate}
	We denote $\mathscr{T}_m^{(t)}=\{\sigma\in \mathscr{T}_m\mid \sigma(t)=1\}$.
\end{Lemma}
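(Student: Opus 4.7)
My plan is to prove both implications by exploiting the block structure observation already noted just before the lemma: if $\sigma\in\mathscr{T}_m$, then for every $r=1,2,\ldots,m$ the values $1,2,\ldots,r$ occupy a consecutive block of positions in the second row of $\sigma$. Conversely, this block-growth condition, together with knowing where $1$ sits, forces the decreasing-then-increasing shape required by the definition of $\mathscr{T}_m$.

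For the forward direction, suppose $\sigma\in\mathscr{T}_m$ and set $t=\sigma^{-1}(1)$, which immediately gives (i). Fix $k_1,k_2\ge 0$ with $k_1+k_2<m$ and assume $\{\sigma(t-k_1),\ldots,\sigma(t+k_2)\}=\{1,\ldots,k_1+k_2+1\}$. Applying the block property with $r=k_1+k_2+2$, the set $\{1,\ldots,k_1+k_2+2\}$ must also occupy a consecutive interval of positions; since this new interval strictly contains $[t-k_1,t+k_2]$ and has exactly one more position, it must be either $[t-k_1-1,t+k_2]$ or $[t-k_1,t+k_2+1]$. In either case, the new element $k_1+k_2+2$ lies at the newly added boundary position, which is exactly condition (ii).

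For the reverse direction, assume (i) and (ii). I will show by induction on $r$ that $\{\sigma^{-1}(1),\ldots,\sigma^{-1}(r)\}$ is an interval of the form $[t-k_1^{(r)},t+k_2^{(r)}]$ with $k_1^{(r)}+k_2^{(r)}=r-1$. The base case $r=1$ is just (i). For the step, condition (ii) applied to the current interval places $r+1$ at one of the two adjacent positions, so the interval grows by exactly one position on one side, preserving the property. This yields the block property for all $r$.

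The subtle step is to convert this block structure back into the monotonicity of the definition of $\mathscr{T}_m$. The key observation is that the values are inserted in the increasing order $2,3,\ldots,m$, and each insertion places the newcomer at one of the two current boundary positions. Therefore the values appearing to the left of $t$, read outward from $t-1$ towards $1$, appear in the order they were inserted on the left side, which is a strictly increasing sequence of numbers. Equivalently, $\sigma(1)>\sigma(2)>\cdots>\sigma(t)=1$; the symmetric argument on the right gives $1=\sigma(t)<\sigma(t+1)<\cdots<\sigma(m)$. By the very definition of $\mathscr{T}_m$, this shows $\sigma\in\mathscr{T}_m^{(t)}\subseteq\mathscr{T}_m$, completing the proof. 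The main (mild) obstacle is precisely this translation from ``additions alternate between the two ends of a growing interval'' to ``the two halves of the permutation are strictly monotone'', but it reduces to the remark that numerical order of the insertions matches the spatial order in which they are recorded on each side.
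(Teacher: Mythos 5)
Your proof is correct. The paper gives no argument for this lemma at all (it is introduced with ``it is not hard to derive''), so there is nothing to diverge from; what you have done is formalize precisely the observation the paper makes informally just before the statement, namely that for $\sigma\in\mathscr{T}_m$ the positions of $1,2,\ldots,r$ form a single block for every $r$, and that conversely the one-position-at-a-time growth of these blocks, combined with the fact that the values are inserted in increasing order, forces the decreasing-then-increasing shape. Both directions are handled cleanly, and the induction in the reverse direction correctly identifies $\sigma^{-1}(\{1,\ldots,r\})$ as an interval around $t$ whose boundary positions receive successively larger values. One small point worth recording: your forward argument implicitly assumes $k_1+k_2+1<m$, whereas the lemma as printed only requires $k_1+k_2<m$; in the boundary case $k_1+k_2+1=m$ the hypothesis of (ii) is met by the full interval $[1,m]$ but neither alternative in the conclusion can hold, so the statement needs the bound tightened to $k_1+k_2+1<m$ (equivalently $k_1+k_2\le m-2$). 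This is an off-by-one in the paper's formulation rather than a gap in your reasoning, but it is worth flagging since your proof is the one that exposes it.
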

The previous lemma is useful for inductive arguments concerning the elements of $\mathscr{T}_m$.% Some of the following lemmas have shorter proofs, but we prefer maintaining such arguments involving the previous lemma, aiming to give new intuition to $\mathscr{T}_m$.

The following is convenient for our applications:
\begin{Lemma}\label{lemma_dois}
	Let $r_1$, $r_2$, \dots, $r_m$ be strictly upper triangular matrix units such that their associative product $r_1r_2\cdots r_m\ne0$. Then
	\begin{enumerate}
		\renewcommand{\labelenumi}{(\roman{enumi})}
		\item $r_{\sigma^{-1}(1)}r_{\sigma^{-1}(2)}\cdots r_{\sigma^{-1}(m)}\ne0$ if and only if $\sigma=1$;
		\item $[r_{\sigma^{-1}(1)},r_{\sigma^{-1}(2)},\cdots,r_{\sigma^{-1}(m)}]\ne0$ if and only if $\sigma\in\mathscr{T}_m$.
	\end{enumerate}
\end{Lemma}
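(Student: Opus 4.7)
The plan is to translate the matrix-unit hypothesis into a combinatorial rigidity statement, which forces both parts to collapse to almost trivial observations. Write $r_i = e_{a_i, b_i}$ with $a_i < b_i$. The hypothesis $r_1 r_2 \cdots r_m \neq 0$ means that, when multiplying matrix units, adjacent indices have to match: $b_i = a_{i+1}$ for each $i$, producing a strictly increasing chain $a_1 < a_2 < \cdots < a_m < b_m$. In particular the first indices $a_1, \ldots, a_m$ are pairwise distinct.

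For (i), look at $r_{\sigma^{-1}(1)} r_{\sigma^{-1}(2)} \cdots r_{\sigma^{-1}(m)}$. Nonvanishing requires $b_{\sigma^{-1}(j)} = a_{\sigma^{-1}(j+1)}$, i.e.\ $a_{\sigma^{-1}(j)+1} = a_{\sigma^{-1}(j+1)}$, for every $j = 1, \ldots, m-1$. Since the $a_i$ are distinct this forces $\sigma^{-1}(j+1) = \sigma^{-1}(j)+1$ for all $j$, and a simple induction then gives $\sigma^{-1} = \mathrm{id}$, that is, $\sigma = 1$. The converse is the hypothesis.

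For (ii), apply the expansion from Lemma~\ref{first_lemma} to the sequence $y_k = r_{\sigma^{-1}(k)}$:
\[
    [r_{\sigma^{-1}(1)}, \ldots, r_{\sigma^{-1}(m)}]
    = \sum_{\tau \in \mathscr{T}_m} (-1)^{\tau^{-1}(1)-1}\, r_{\sigma^{-1}(\tau(1))}\cdots r_{\sigma^{-1}(\tau(m))}.
\]
Each term is an associative product of the same matrix units $r_1, \ldots, r_m$ permuted by $\sigma^{-1}\circ \tau$, so by (i) it is nonzero if and only if $\sigma^{-1}\tau = 1$, i.e.\ $\tau = \sigma$. Therefore if $\sigma \in \mathscr{T}_m$ exactly one term survives and its value is $\pm r_1 r_2 \cdots r_m \neq 0$; if $\sigma \notin \mathscr{T}_m$ every term vanishes.

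The argument is essentially a bookkeeping exercise, and the only genuine point to watch is that the sum in the expansion cannot cancel: at most one summand is nonzero, so the sign in front of it is irrelevant. The mild obstacle is keeping the $\sigma$ versus $\sigma^{-1}$ conventions aligned when composing with the summation index $\tau$, but once the expansion of Lemma~\ref{first_lemma} is applied to the reindexed sequence, this reduces to the elementary identification $\sigma^{-1}\tau = 1 \Leftrightarrow \tau = \sigma$.
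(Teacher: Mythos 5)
Your proof is correct. Part (i) is the same index-chain argument as the paper's: writing $r_i=e_{a_i,b_i}$, the hypothesis forces $a_1<a_2<\cdots<a_m<b_m$, and distinctness of the left indices pins the permutation to the identity (you should just note that the substitution $b_i=a_{i+1}$ needs the convention $a_{m+1}:=b_m$ so that the case $\sigma^{-1}(j)=m$, $j<m$, is also excluded, but this is cosmetic). For part (ii), however, you take a genuinely different route. The paper works directly with the recursive left-normed structure: from $[r_{\sigma^{-1}(1)},r_{\sigma^{-1}(2)}]\ne0$ it deduces $\sigma^{-1}(2)=\sigma^{-1}(1)\pm1$, and iterating shows that $\{\sigma^{-1}(1),\dots,\sigma^{-1}(k)\}$ must be a contiguous block at every stage, which is exactly the defining property of $\mathscr{T}_m$. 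You instead expand the commutator via the formula in Lemma~\ref{first_lemma} applied to $y_k=r_{\sigma^{-1}(k)}$ and use part (i) to see that at most one associative monomial survives, namely the one with $\tau=\sigma$; the dichotomy $\sigma\in\mathscr{T}_m$ versus $\sigma\notin\mathscr{T}_m$ then decides whether that surviving term is present in the sum at all. Your argument is cleaner and handles both directions of the equivalence in one stroke, and the observation that no cancellation can occur (since only one summand is nonzero) is exactly the point that needs to be made explicit; the cost is that it leans on the associative expansion formula quoted from the literature, whereas the paper's induction is self-contained and gives more direct intuition for why the elements of $\mathscr{T}_m$ are precisely the ``contiguous growth'' permutations.
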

\begin{proof}
	Let $r_1$, $r_2$, \dots, $r_m$ be strictly upper triangular matrices such that their associative product $r_1r_2\cdots r_m\ne0$. 
			Then $r_{\sigma(1)}r_{\sigma(2)}\cdots r_{\sigma(m)}\ne0$ if and only if $\sigma=1$, since every $r_l=e_{i_lj_l}$ and $i_l<j_l$. Note that $r_lr_k\ne0$ if and only if $j_l=i_k$.

			Now consider the Lie commutator. Let $\sigma\in \mathcal{S}_m$ be such that  
			\begin{equation}
				\label{tlie}
				[r_{\sigma^{-1}(1)},r_{\sigma^{-1}(2)},\ldots,r_{\sigma^{-1}(m)}]\ne0
				\end{equation}
			Assume $t=\sigma^{-1}(1)$. Since $[r_{\sigma^{-1}(1)},r_{\sigma^{-1}(2)}]\ne0$, it follows that either  $\sigma^{-1}(2)=t+1$ or $\sigma^{-1}(2)=t-1$. Iterating this and using induction we obtain that $\sigma\in\mathscr{T}_m$, by Lemma \ref{originalequivalence}. The same idea can be used to prove the converse, that is (\ref{tlie}) holds for each $\sigma\in\mathscr{T}_m$.
\end{proof}

We remark that if $m_1\le m_2$ we can consider $\mathcal{S}_{m_1}$ as a subgroup of $\mathcal{S}_{m_2}$ in the usual manner, that is the elements of $\mathcal{S}_{m_1}$ fix all symbols $t>m_1$. Using the same identification, we can consider $\mathscr{T}_{m_1}$ as a subset of $\mathscr{T}_{m_2}$. From definition we obtain an interesting consequence. 
\begin{Cor}\label{cor_cite}
	Let $\sigma\in\mathscr{T}_m$ and $m_1=\sigma(1)$. Then $\sigma\in\mathscr{T}_{m_1}$ (that is, $\sigma(t)=t$ for $t>m_1$).
\end{Cor}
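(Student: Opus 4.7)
The plan is to read the conclusion straight off the defining description of $\mathscr{T}_m$. By that definition we may write
$$\sigma(1)>\sigma(2)>\cdots>\sigma(t)=1<\sigma(t+1)<\sigma(t+2)<\cdots<\sigma(m)$$
for some $t$, and by hypothesis $\sigma(1)=m_1$. Since $\sigma(1)$ is the maximum of the decreasing prefix, every value appearing among $\sigma(1),\ldots,\sigma(t)$ lies in $\{1,2,\ldots,m_1\}$; hence the values $m_1+1,m_1+2,\ldots,m$ are forced to appear in the increasing suffix $\sigma(t+1)<\cdots<\sigma(m)$.

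The next step is to pin down where. They are the $m-m_1$ largest elements of $\{1,\ldots,m\}$, and they sit inside a strictly increasing sequence, so they must occupy the last $m-m_1$ positions of that suffix and do so in their natural order. This yields $\sigma(m)=m$, $\sigma(m-1)=m-1,\ldots,\sigma(m_1+1)=m_1+1$, i.e.\ $\sigma(t)=t$ for every $t>m_1$, which is exactly the statement $\sigma\in\mathscr{T}_{m_1}$ under the inclusion $\mathscr{T}_{m_1}\hookrightarrow\mathscr{T}_m$ discussed right before the corollary. No genuine obstacle is expected; the whole argument is just careful bookkeeping against the definition.

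If a more algebraic derivation were preferred, one can instead invoke Lemma~\ref{first_lemma}(iii) and write $\sigma=(j_r\ldots 1)\cdots(j_1\ldots 1)$ with $j_1>j_2>\cdots>j_r>1$ and $j_1=\sigma(1)=m_1$. Each cycle $(j_i\ldots 1)$ moves only symbols in $\{1,\ldots,j_i\}\subseteq\{1,\ldots,m_1\}$, so it fixes every $t>m_1$; their product $\sigma$ then does the same, giving the corollary at once.
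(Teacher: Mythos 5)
Your proof is correct. The closing paragraph invoking Lemma~\ref{first_lemma}(iii) is exactly the paper's own (one-line) proof: since $j_1=\sigma(1)=m_1$ is the largest of the $j_i$, every cycle $(j_i\,\ldots\,1)$ fixes all symbols $t>m_1$, hence so does $\sigma$. Your first, purely definitional argument is also valid (noting $t\le m_1$ guarantees the positions $m_1+1,\ldots,m$ lie in the increasing suffix) and serves as a fine elementary alternative, but it buys nothing beyond the cycle-decomposition route.
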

\begin{proof}
	Direct from Lemma \ref{first_lemma}, item 3.
\end{proof}

Now, consider the following special subset of permutations in $\mathscr{T}_m$:
\begin{Def}
	For every $i=1,2,\dots,m$, the $i$-reverse permutation is given by
	\[
		\tau_i=\left(\begin{array}{ccccccccc}
			1&2&\ldots&i-1&i&i+1&i+2&\ldots&m\\
			i&i-1&\ldots&2&1&i+1&i+2&\ldots&m
			\end{array}\right).
	\]
\end{Def}
We remark that, for every $i=1$, 2, \dots, $m$ one has $\tau_i\in\mathscr{T}_m^{(i)}$ and $\tau_i^2=1$. Moreover, $\tau_{i-1}\circ\tau_i=(i\quad i-1\quad\ldots\quad1)$. So, by Lemma \ref{first_lemma}.(3), we obtain:

\begin{Cor}\label{prod_equiv}
	$\displaystyle\mathscr{T}_m=\left\{\tau_1^{i_1}\circ\tau_2^{i_2}\circ\cdots\circ\tau_m^{i_m}\mid i_1,i_2,\dots,i_m\in\{0,1\}\right\}$.
\end{Cor}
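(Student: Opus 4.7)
The plan is to prove the corollary by induction on $m$, exploiting the decomposition $\mathscr{T}_m = \mathscr{T}_{m-1} \cup (\mathscr{T}_{m-1}\cdot\tau_m)$ established in the preceding lemma together with the involution property $\tau_m^2 = 1$.

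For the base case $m=1$, the set $\mathscr{T}_1$ consists of the identity permutation of $\{1\}$, and $\tau_1$ is by definition that identity, so $\tau_1^{i_1} = 1$ for any $i_1 \in \{1,2\}$, and both sides coincide.

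For the inductive step, I would assume the equality $\mathscr{T}_{m-1} = \{\tau_1^{i_1}\circ\cdots\circ\tau_{m-1}^{i_{m-1}} \mid i_j \in \{1,2\}\}$ and argue both inclusions. Given $\sigma \in \mathscr{T}_m$, the previous lemma gives two cases: either $\sigma \in \mathscr{T}_{m-1}$, in which case the induction hypothesis yields $\sigma = \tau_1^{i_1}\circ\cdots\circ\tau_{m-1}^{i_{m-1}}$ and we append $\tau_m^{2} = 1$ to write $\sigma$ with the desired form taking $i_m = 2$; or $\sigma = \sigma'\circ\tau_m$ with $\sigma' \in \mathscr{T}_{m-1}$, and we simply apply the induction hypothesis to $\sigma'$ and take $i_m = 1$. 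Conversely, any word $\tau_1^{i_1}\circ\cdots\circ\tau_m^{i_m}$ has a prefix $\tau_1^{i_1}\circ\cdots\circ\tau_{m-1}^{i_{m-1}}$ in $\mathscr{T}_{m-1}$ by induction, and depending on whether $i_m = 2$ or $i_m = 1$, the full product lies in $\mathscr{T}_{m-1}$ or in $\mathscr{T}_{m-1}\cdot\tau_m$, hence in $\mathscr{T}_m$ by the previous lemma.

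There is no real obstacle: the argument is a direct two-line induction once the previous lemma is in place. The only subtlety worth pointing out is that the set on the right has $2^m$ formal expressions but only $2^{m-1}$ distinct permutations, which is consistent with $|\mathscr{T}_m| = 2^{m-1}$ because $\tau_1$ is the identity so the exponent $i_1$ is irrelevant; this matches the cardinality count and confirms there is no overcounting beyond the one coming from $\tau_1$.
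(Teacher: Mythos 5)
Your proof is correct and follows exactly the route the paper intends: the corollary is stated without proof as an immediate iteration of the preceding lemma $\mathscr{T}_m=\mathscr{T}_{m-1}\cup(\mathscr{T}_{m-1}\cdot\tau_m)$ together with $\tau_m^2=1$, which is precisely your induction. Your closing remark on the cardinality ($2^m$ formal words, $2^{m-1}$ distinct permutations, the discrepancy absorbed by $\tau_1=1$) is a correct and worthwhile sanity check.
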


Note that it is easy to obtain the decomposition of elements of $\mathscr{T}_m$ into product of $\tau_i$. Corollary \ref{prod_equiv} also tells when the product of two elements of $\mathscr{T}_m$ still belongs to $\mathscr{T}_m$.

\begin{Remark}Now, using ideas presented in \cite{BL1992} concerning elements in the group algebra, and using Corollary \ref{prod_equiv}, we obtain the following extra fact. Consider the element $v_m=\sum_{\sigma\in\mathscr{T}_m}(-1)^{\sigma^{-1}(1)-1}\sigma$ in the group algebra $\mathbb{Z}\mathcal{S}_m$. Then:
	\begin{enumerate}
		\item\cite{BL1992} we have
			\begin{align*}
				\displaystyle v_m&=\prod_{i=m}^2(1-(i,i-1,\ldots,1))\\
						&=(1-(m,m-1,\ldots,1))\cdots(1-(2\quad1)).
			\end{align*}
		\item  $\displaystyle v_m=\prod_{i=2}^m(1-(-1)^i\tau_i)=(1-\tau_2)(1+\tau_3)\cdots(1-(-1)^m\tau_m)$.
	\end{enumerate}
\end{Remark}

\section{Action on sequences.} Let $X$ be any set, and fix a sequence $s\in X^m$. Then we have a left action of $\mathcal{S}_m$ on the elements $s=(s_1,s_2,\dots,s_m)\in X^m$ permuting the order of the elements
	\[
		\sigma (s_1,s_2,\dots,s_m)=(s_{\sigma^{-1}(1)},s_{\sigma^{-1}(2)},\dots,s_{\sigma^{-1}(m)}),\quad\sigma\in \mathcal{S}_m.
	\]

We are interested in the following notion.
\begin{Def}
	Given two sequences $s, s'\in X^m$, we say that $s$ and $s'$ are mirrored if $\mathscr{T}_ms=\mathscr{T}_ms'$.
\end{Def}
Equivalently, $s$ and $s'$ are mirrored if and only if for all $\tau', \sigma\in\mathscr{T}_m$, we can find $\tau, \sigma'\in\mathscr{T}_m$ such that $\sigma s=\sigma's'$ and $\tau' s'=\tau s$.

We give two examples of mirrored sequences.

\noindent\textbf{Notation.} Given $s\in X^m$, we denote by $\text{rev}\,s:=\tau_m s$, the reverse sequence of $s$.
\begin{Ex}
	The trivial example: for any $s\in X^m$, $s$ and $s$ are mirrored.
\end{Ex}
\begin{Ex}
	If $s,s'\in X^m$ are mirrored then $s$ and $\text{rev}\,s'$ are also mirrored.
\end{Ex}
\begin{proof}
	Let $\sigma\in\mathscr{T}_m$. Then there exists $\sigma'\in\mathscr{T}_m$ such that $\sigma s=\sigma's'$, since $s$ and $s'$ are mirrored. Also $\sigma'':=\sigma'\tau_m\in\mathscr{T}_m$, by Corollary \ref{prod_equiv}, and \[\sigma''\text{rev}\,s'=\sigma'\tau_m\tau_m s'=\sigma' s'=\sigma s.\]

	Conversely, given $\tau'\in\mathscr{T}_m$, we have $\tau'\tau_m\in\mathscr{T}_m$ and there exists $\tau\in\mathscr{T}_m$ such that $\tau s=\tau'\tau_m s'=\tau'\text{rev}\,s'$. Hence $s$ and $\text{rev}\,s'$ are mirrored.
\end{proof}

We will prove that these two examples are the unique way to produce mirrored sequences. A precise statement of our main result is as follows.
\begin{Theo}\label{mainthm}
	Let $X$ be any set, $m\in\mathbb{N}$, and let $s,s'\in X^m$. Then $\mathscr{T}_ms=\mathscr{T}_ms'$ if and only if $s=s'$ or $s=\text{rev}\,s'$.
\end{Theo}
An equivalent statement is the following:
\begin{Cor}
	Let $X$ be any set and $s,s'\in X^m$. Then $s=s'$ or $s=\text{rev}\,s'$ if and only if for any $\sigma,\tau'\in\mathscr{T}_m$, we can find $\sigma',\tau\in\mathscr{T}_m$ such that $\sigma s=\sigma's'$ and $\tau s=\tau's'$.
\end{Cor}

We have already proved one of the implications of Theorem \ref{mainthm} in the two previous examples. The last section is dedicated exclusively to prove the converse.

\section{Proof of Theorem \ref{mainthm}.}
We fix some notations.

\noindent\textbf{Notation.} We denote $I_m=\{1,2,\ldots,m\}$. Note that any $s\in X^m$ can be viewed as a function $s:I_m\to X$ and for any $\sigma\in \mathcal{S}_m$, $\sigma s=s\circ\sigma^{-1}:I_m\to X$ (equality of functions). Given integers $0< m_2\le m$, we denote by $I_{-m_2}^{(m)}=\{m,m-1,\ldots,m-m_2+1\}$, and $I_{0}^{(m)}=\emptyset$ (the last $m_2$ elements of $I_m$).

First we deduce several useful properties.

\begin{Def}
	Let $s$, $s':I_m\to X$, and set $A=s(1)$. A coincidence of $(s,s')$ is a pair $(m_1,m_2)$ where $m_1>0$ and $m_2\ge0$ are integers satisfying:
	\begin{enumerate}
		\item $s(i)=s'(i)=A$, for all $i=1,2,\ldots,m_1,$ and $i=m,m-1,\ldots,m-m_2+1$,
		\item $s(m_1+1)\ne A$, $s'(m_1+1)\ne A$, $s(m-m_2)\ne A$ and $s'(m-m_2)\ne A$.
	\end{enumerate}
	In this context, we denote $m'=m_1+m_2$, $I_m^{(-)}=I_{m_1}\cup I_{-m_2}^{(m)}$ and $I_m'=I_m\setminus I_m^{(-)}$.
\end{Def}

\begin{Lemma}
	Let $m_1,m_2\ge0$ with $m_1+m_2\le m$. Then $\sigma\in\mathscr{T}_m$ satisfies:
	\begin{itemize}
		\item $\sigma(m-i+1)=m-i+1$, for $i=1,2,\ldots,m_2$,
		\item $\sigma(i)=m-m_2-i+1$, for $i=1,2,\ldots,m_1$,
	\end{itemize}
	if and only if $\sigma\in\mathscr{T}_{m-m_1-m_2}\circ\tau_{m-m_2}$.
\end{Lemma}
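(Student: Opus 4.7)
The plan is to prove the equivalence by direct computation, decomposing $I_m$ into the three blocks $\{1,\ldots,m_1\}$, $\{m_1+1,\ldots,m-m_2\}$, $\{m-m_2+1,\ldots,m\}$ and analysing what $\tau_{m-m_2}$ does on each. Recall $\tau_{m-m_2}$ sends $j\mapsto m-m_2-j+1$ on $\{1,\ldots,m-m_2\}$ and fixes every index $>m-m_2$; in particular it swaps the first $m_1$ positions with the positions $\{m-m_2-m_1+1,\ldots,m-m_2\}$ and reverses the middle block $\{m_1+1,\ldots,m-m_2\}$ internally.

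For the "if" direction, I take $\pi\in\mathscr{T}_{m-m_1-m_2}$ (so $\pi$ fixes every $j>m-m_1-m_2$ and restricts to a valley ending at $1$) and set $\sigma=\pi\circ\tau_{m-m_2}$. A case split on $i$ shows $\sigma$ satisfies the two listed conditions: for $i>m-m_2$ both factors fix $i$; for $i\le m_1$ we have $\tau_{m-m_2}(i)=m-m_2-i+1\ge m_1+1>m-m_1-m_2$, so $\pi$ fixes this value, giving $\sigma(i)=m-m_2-i+1$. To check $\sigma\in\mathscr{T}_m$, I use the characterization of Lemma~\ref{first_lemma}(2): on the first block $\sigma$ strictly decreases from $m-m_2$ to $m-m_2-m_1+1$; on the last block $\sigma$ is the identity, hence strictly increasing; on the middle block $\sigma(i)=\pi(m-m_2-i+1)$ is just the reverse of $\pi(1),\ldots,\pi(m-m_1-m_2)$, which is again a valley with minimum $1$. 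The boundary inequalities $\sigma(m_1)=m-m_2-m_1+1>m-m_1-m_2\ge\sigma(m_1+1)$ and $\sigma(m-m_2)\le m-m_1-m_2<m-m_2+1=\sigma(m-m_2+1)$ glue the three blocks into a single global valley pattern, so $\sigma\in\mathscr{T}_m$.

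For the "only if" direction, given such a $\sigma$, I set $\pi=\sigma\circ\tau_{m-m_2}$ (so $\sigma=\pi\circ\tau_{m-m_2}$ by $\tau_{m-m_2}^2=1$) and show $\pi\in\mathscr{T}_{m-m_1-m_2}$. First, $\pi$ fixes every $j>m-m_1-m_2$: when $j>m-m_2$ this is automatic; when $m-m_1-m_2<j\le m-m_2$, put $i=m-m_2-j+1\in\{1,\ldots,m_1\}$ so that $\pi(j)=\sigma(i)=m-m_2-i+1=j$. Since $\sigma$ is injective and already uses the values $\{m-m_2-m_1+1,\ldots,m\}$ on $\{1,\ldots,m_1\}\cup\{m-m_2+1,\ldots,m\}$, the middle block of $\sigma$ permutes $\{1,\ldots,m-m_1-m_2\}$. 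The $\mathscr{T}_m$ condition on $\sigma$ forces its valley index $t$ into the middle block (the boundary inequalities above imply $m_1+1\le t\le m-m_2$), so $\sigma$ restricted to $\{m_1+1,\ldots,m-m_2\}$ is itself a valley with minimum $1$, and reversing it yields the required valley pattern for $\pi$ on $\{1,\ldots,m-m_1-m_2\}$. Thus $\pi\in\mathscr{T}_{m-m_1-m_2}$.

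The only conceptual content is the observation that reversing a $\mathscr{T}$-valley is again a $\mathscr{T}$-valley, which is what lets $\tau_{m-m_2}$ mediate the correspondence; everything else is index bookkeeping. The main obstacle is merely notational hygiene: keeping the three blocks and their images straight, and verifying the two boundary inequalities that ensure the global monotonicity pattern is preserved when the middle block is glued to the fixed outer blocks.
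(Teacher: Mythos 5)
Your proof is correct and follows the same route as the paper's (very terse) argument: compose with $\tau_{m-m_2}$ and compute block by block, using that reversing a $\mathscr{T}$-valley is again a valley. You in fact supply details the paper waves away --- notably the verification that $\mathscr{T}_{m-m_1-m_2}\circ\tau_{m-m_2}\subset\mathscr{T}_m$ via the boundary inequalities, and the full converse --- so nothing is missing.
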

\begin{proof}
Let $\sigma\in\mathscr{T}_{m-m_1-m_2}\circ\tau_{m-m_2}$, then clearly $\sigma(m-i+1)=m-i+1$, for $i=1,2,\ldots,m_2$. Also, writing $\sigma=\sigma'\circ\tau_{m-m_2}$, we have $\sigma(1)=\sigma'(m-m_2)=m-m_2$, since $\sigma'\in\mathscr{T}_{m-m_1-m_2}$, and using the same idea, we see that $\sigma$ satisfies the second condition.

We can prove the converse using the same idea.
\end{proof}
\begin{Ex}
	It is easy to represent such a permutation $\sigma\in\mathscr{T}_{m-m_1-m_2}\circ\tau_{m-m_2}$: the last $m_2$ entries of $\sigma$ are:
	\[
		\sigma=\left(\begin{array}{ccccc}
			\ldots&m-m_2+1&\ldots&m-1&m\\
			\ldots&m-m_2+1&\ldots&m-1&m
			\end{array}\right)
	\]
	and the first $m_1$ entries are:
	\[
		\sigma=\left(\begin{array}{ccccc}
			1&2&\ldots&m_1&\ldots\\
			m_0&m_0-1&\ldots&m_0-m_1+'&\ldots
			\end{array}\right),
	\]
	where $m_0=m-m_2$.
\end{Ex}
The following example serves as a motivation for considering the above permutations and  the notion of coincidence:
\begin{Ex}\label{cool_example}
	Let $(m_1,m_2)$ be a coincidence for $(s,s')$, and let $A=s(1)$. Then $s$ and $s'$ are written as:
	\begin{eqnarray*}
		s&=&(\underbrace{A,\ldots,A}_{\text{$m_1$ times}},B_1,\ldots,B_l,\underbrace{A,\ldots,A}_{\text{$m_2$ times}})\\
		s'&=&(A,\ldots,A,B_1',\ldots,B_l',A,\ldots,A),
	\end{eqnarray*}
	where $B_1\ne A$, $B_1'\ne A$, $B_l\ne A$ and $B_l'\ne A$. A permutation $\sigma\in\mathscr{T}_{m-m'}\circ\tau_{m-m_2}$ will act like:
	\[
		\sigma s=(C_1,\ldots,C_l,\underbrace{A,\ldots,A}_{\text{$m'$ times}}),
	\]
	where $C_l\ne A$.
\end{Ex}

Let $s:I_m\to X$. Then there is a coincidence $(m_1,m_2)$ of $(s,s)$, and we can consider the restriction $s_0=s\mid_{I_m'}$. Denote $\varphi:n\in I_{m-m'}\mapsto n+m_1\in I_m'$, and let
\[
	\mathscr{T}_m'=\left\{\varphi\circ\sigma\circ\varphi^{-1}\mid\sigma\in\mathscr{T}_{m-m'}\right\}.
\]
For every $\sigma'\in\mathscr{T}_m$, we can define an element $\sigma\in\mathscr{T}_{m-m'}\circ\tau_{m-m_2}$ by
\[
	\sigma(i)=\left\{\begin{array}{l}
			\varphi^{-1}\circ\sigma(i),\quad\text{if $i\in I_m'$},\\
			m-i+1-m_2,\quad\text{if $i\in I_{m_1}$},\\
			i,\quad\text{if $i\in I_{-m_2}^{(m)}$}.
		\end{array}\right.
\]
Hence we can see $\mathscr{T}_m'$ as a subset of $\mathscr{T}_{m-m'}\circ\tau_{m-m_2}$. In particular, both sets have the same cardinality, so we have equality of sets. It is also easy to see what an action of $\sigma\in\mathscr{T}_{m-m'}\circ\tau_{m-m_2}$ on $s_0$ will be.
\begin{Ex}
	Given $s:I_m\to X$, write
	\[
		s=(\underbrace{A,\ldots,A}_{\text{$m_1$ times}},B_1,\ldots,B_l,\underbrace{A,\ldots,A}_{\text{$m_2$ times}}),
	\]
	with $B_l\ne A$. An element $\sigma\in\mathscr{T}_{m-m'}\circ\tau_{m-m_2}$ acts like
	\[
		\sigma s=(C_1,\ldots,C_l,A,\ldots,A),
	\]
	with $C_l\ne A$. Using this notation, we will have a coincidence $(m_1,m_2)$ of $(s,s)$, $s_0=s\mid_{I_m'}=(B_1,\ldots,B_l)$ and $\sigma s_0=(C_1,\ldots,C_l)$.
\end{Ex}
\begin{Lemma}
	Using the above notation, if $\sigma\in\mathscr{T}_m$ satisfies
	\begin{align*}
		\{\sigma^{-1}(m),\sigma^{-1}(m-1),\ldots,\sigma^{-1}(m-m'+1)\}=\\
		\{1,2,\ldots,m_1,m,m-1,\ldots,m-m_2+1\},
	\end{align*}
	there exists $\tau\in\mathscr{T}_{m-m'}\circ\tau_{m-m_2}$ such that $\sigma s=\tau s$.
\end{Lemma}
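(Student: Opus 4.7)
The hypothesis rewrites as $\sigma\bigl(\{1,\ldots,m_1\}\cup\{m-m_2+1,\ldots,m\}\bigr)=\{m-m'+1,\ldots,m\}$, and taking complements in $I_m$ gives $\sigma\bigl(\{m_1+1,\ldots,m-m_2\}\bigr)=\{1,\ldots,m-m'\}$. In particular $t:=\sigma^{-1}(1)$ lies in the middle block $\{m_1+1,\ldots,m-m_2\}$. The plan is to exhibit an explicit $\tau=\tau'\circ\tau_{m-m_2}$ in $\mathscr{T}_{m-m'}\circ\tau_{m-m_2}$ that agrees with $\sigma$ on $s$.

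I define $\tau'$ on $I_{m-m'}$ by the formula
\[
\tau'(j):=\sigma(m-m_2-j+1),\qquad j=1,2,\ldots,m-m',
\]
extended as the identity on $\{m-m'+1,\ldots,m\}$ so that $\tau'\in\mathcal{S}_m$ in the usual embedding. Since the positions $m-m_2-j+1$ range over the middle block as $j$ ranges over $I_{m-m'}$, the observation above guarantees that $\tau'$ is a bijection of $I_{m-m'}$ onto itself.

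Next I verify $\tau'\in\mathscr{T}_{m-m'}$ using Lemma \ref{first_lemma}. Because $\sigma\in\mathscr{T}_m$, its values strictly decrease from $\sigma(1)$ down to $\sigma(t)=1$ and then strictly increase up to $\sigma(m)$; restricted to the middle block (which contains $t$) this same V-shape persists with minimum still at $t$. The substitution $j\mapsto m-m_2-j+1$ reverses the order of the middle positions, so with $s:=m-m_2-t+1$ we obtain
\[
\tau'(1)>\tau'(2)>\cdots>\tau'(s)=1<\tau'(s+1)<\cdots<\tau'(m-m'),
\]
which is exactly the defining condition of $\mathscr{T}_{m-m'}$.

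Finally, setting $\tau=\tau'\circ\tau_{m-m_2}$, I check $\tau s=\sigma s$ pointwise. For $i>m-m'$ the preceding lemma forces $\tau^{-1}(i)\in\{1,\ldots,m_1\}\cup\{m-m_2+1,\ldots,m\}$, and by hypothesis $\sigma^{-1}(i)$ lies in the same set; on this edge $s$ is constantly equal to $A$, so both sides give $A$. For $i\le m-m'$, using $\tau^{-1}=\tau_{m-m_2}\circ\tau'^{-1}$ and the definition of $\tau'$, a direct substitution gives $\tau^{-1}(i)=m-m_2-\tau'^{-1}(i)+1=\sigma^{-1}(i)$, so $s(\tau^{-1}(i))=s(\sigma^{-1}(i))$. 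The only delicate step is the verification that $\tau'$ has the required decreasing-then-increasing profile; once one keeps track of the position $s=m-m_2-t+1$ of the minimum after reversal, everything else is routine bookkeeping.
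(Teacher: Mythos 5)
Your proof is correct and takes essentially the same route as the paper's: both observe that the hypothesis forces $\sigma$ to map the middle block $I_m'$ onto $\{1,\ldots,m-m'\}$ with the inherited V-shape, and then replace $\sigma$ by the element of $\mathscr{T}_{m-m'}\circ\tau_{m-m_2}$ acting identically on the middle block while only shuffling positions where $s$ equals $A$; you merely carry out explicitly the identification of $\mathscr{T}_m'$ with $\mathscr{T}_{m-m'}\circ\tau_{m-m_2}$ that the paper sets up before the lemma. The only blemish is notational: you reuse the letter $s$ for the position $m-m_2-t+1$ of the minimum of $\tau'$, which clashes with the sequence $s$.
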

\begin{proof}
	Note that, in this case, $\varphi\circ\sigma(I_m')\subset I_m'$, so we can see $\sigma$ as an element of $\mathscr{T}_m'$, and therefore we can construct one such $\tau\in\mathscr{T}_{m-m'}\circ\tau_{m-m_2}$.
\end{proof}

\begin{Lemma}\label{nice_lemma}
	Let $s,s':I_m\to X$ be mirrored and assume that there is a coincidence $(m_1,m_2)$ of $(s,s')$. Then $s_0=s\mid_{I_m'}$ and $s_0'=s'\mid_{I_m'}$ are mirrored.
\end{Lemma}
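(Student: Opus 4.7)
The plan is to lift any candidate permutation on the truncated sequences $s_0, s_0'$ to a permutation on the full sequences using the bijection between $\mathscr{T}_{m-m'}$ and $\mathscr{T}_{m-m'}\circ\tau_{m-m_2}$ described just before the preceding lemma, apply the mirroring hypothesis for $(s,s')$ at the level of $\mathscr{T}_m$, and then project the resulting permutation back down to an element of $\mathscr{T}_{m-m'}$.

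First, fix $\sigma\in\mathscr{T}_{m-m'}$. Using the correspondence $\mathscr{T}_{m-m'}\leftrightarrow\mathscr{T}_m'\leftrightarrow\mathscr{T}_{m-m'}\circ\tau_{m-m_2}$, we obtain a permutation $\tilde\sigma\in\mathscr{T}_{m-m'}\circ\tau_{m-m_2}\subseteq\mathscr{T}_m$ whose action on $s$, as computed in the example preceding Lemma \ref{nice_lemma}, reads
\[
\tilde\sigma\, s=(\,\sigma s_0\,,\,\underbrace{A,\ldots,A}_{m'\text{ times}}\,).
\]
Since $s$ and $s'$ are mirrored, there exists $\tilde\sigma'\in\mathscr{T}_m$ with $\tilde\sigma' s'=\tilde\sigma s$; in particular the last $m'$ entries of $\tilde\sigma' s'$ are all equal to $A$.

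The crucial point is Step 3: by the hypothesis that $(m_1,m_2)$ is a coincidence of $(s,s')$, the set of $A$-positions of $s'$ equals exactly $I_m^{(-)}=I_{m_1}\cup I_{-m_2}^{(m)}$. Because the last $m'$ entries of $\tilde\sigma' s'$ are $A$, their $\tilde\sigma'$-preimages must be $A$-positions of $s'$, so $(\tilde\sigma')^{-1}(\{m-m'+1,\ldots,m\})\subseteq I_m^{(-)}$; both sets have size $m'$, hence they coincide. This is precisely the hypothesis of the lemma that immediately precedes Lemma \ref{nice_lemma}, applied now to $s'$ (noting that $(m_1,m_2)$ is also a coincidence of $(s',s')$ since the $A$-positions of $s$ and $s'$ agree). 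That lemma therefore produces $\tau'\in\mathscr{T}_{m-m'}\circ\tau_{m-m_2}$ with $\tau' s'=\tilde\sigma' s'=\tilde\sigma s$. Projecting $\tau'$ back via the bijection above yields $\sigma''\in\mathscr{T}_{m-m'}$ with $\tau' s'=(\sigma'' s_0',A,\ldots,A)$, and comparing the first $m-m'$ entries gives $\sigma'' s_0'=\sigma s_0$, as required.

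The reverse direction (given $\tau'\in\mathscr{T}_{m-m'}$ acting on $s_0'$, produce $\tau\in\mathscr{T}_{m-m'}$ with $\tau s_0=\tau' s_0'$) follows by the symmetric argument, using the other half of the mirroring hypothesis. I expect the main obstacle to be the bookkeeping in Step 3: carefully checking that the $\tilde\sigma'\in\mathscr{T}_m$ furnished by the mirroring assumption really has the very rigid form needed to lie in $\mathscr{T}_{m-m'}\circ\tau_{m-m_2}$ modulo its action on $s'$. This is where the coincidence hypothesis does all of the work, by forcing the cardinalities of the $A$-block and its image to match.
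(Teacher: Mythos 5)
Your overall strategy coincides with the paper's: lift $\sigma$ to $\tilde\sigma\in\mathscr{T}_{m-m'}\circ\tau_{m-m_2}$, invoke the mirroring of $(s,s')$ to obtain $\tilde\sigma'\in\mathscr{T}_m$ with $\tilde\sigma' s'=\tilde\sigma s$, show that $(\tilde\sigma')^{-1}(\{m-m'+1,\ldots,m\})=I_m^{(-)}$, and project back down. The gap is in your Step 3: a coincidence $(m_1,m_2)$ of $(s,s')$ does \emph{not} assert that the $A$-positions of $s'$ are exactly $I_m^{(-)}$; it only asserts that $s'$ equals $A$ on $I_m^{(-)}$ and differs from $A$ at the two boundary positions $m_1+1$ and $m-m_2$. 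Interior occurrences of $A$ are allowed: for instance $s=s'=(A,B,A,B,A)$ admits the coincidence $(1,1)$ while $3$ is also an $A$-position. Consequently your inclusion $(\tilde\sigma')^{-1}(\{m-m'+1,\ldots,m\})\subseteq I_m^{(-)}$ does not follow from ``preimages of $A$-entries are $A$-positions,'' and the cardinality count cannot close the argument, because the set of $A$-positions of $s'$ may contain strictly more than $m'$ elements.

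The step can be repaired, and this is exactly where the paper puts the work: since $\tilde\sigma'\in\mathscr{T}_m$ descends to $1$ and then ascends, the preimage of the top $m'$ values $\{m-m'+1,\ldots,m\}$ is necessarily a prefix $\{1,\ldots,a\}$ together with a suffix $\{m-b+1,\ldots,m\}$ with $a+b=m'$. If $a>m_1$ this set contains $m_1+1$, and if $b>m_2$ it contains $m-m_2$; at either of those positions $s'\ne A$ by condition (2) of the definition of coincidence, whereas every position in this preimage must carry the value $A$ (the last $m'$ entries of $\tilde\sigma s$ are all $A$). Hence $a\le m_1$ and $b\le m_2$, and $a+b=m_1+m_2$ forces $a=m_1$, $b=m_2$. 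Note that it is precisely condition (2), which your write-up never invokes, that does the work here. With that correction your argument is essentially the paper's proof, including the appeal to symmetry for the reverse direction.
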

\begin{proof}
	For every $\sigma\in\mathscr{T}_{m-m'}\circ\tau_{m-m_2}$, we can find $\tau\in\mathscr{T}_m$ such that $\sigma s=\tau s'$. We consider $\sigma\in\mathscr{T}_m'$, hence it is sufficient to prove that $\tau$ is an element of $\mathscr{T}_m'$. Using the previous lemma, assume that there is $i$ such that $i\in\{m,m-1,\ldots,m-m'+1\}$ and $\tau^{-1}(i)=m_1+1$ (or $\tau^{-1}(i)=m-m_2$). By the definition of coincidence, we know that $s'(m_1+1)\ne A$ and $s'(m-m_2)\ne A$, where $A=s(1)$. Also, by the choice of $\sigma$, we know that $\sigma^{-1}(i)\in\{1,2,\ldots,m_1,m,m-1,\ldots,m-m_2+1\}$. Therefore $\sigma s(i)=A$ (see example \ref{cool_example}). Thus $\tau s'(i)\ne A=\sigma s(i)$, a  contradiction. This proves that we can find $\tau'\in\mathscr{T}_m'$ with $\tau'r'=\tau r$, and in particular, $\sigma s_0=\tau' s_0'$. Since a coincidence $(s,s')$ is also a coincidence of $(s',s)$, we can repeat the argument and prove that $s_0$ and $s_0'$ are mirrored.
\end{proof}

\begin{Def}
	Let $s:I_m\to X$ and $w:I_d\to X$ with $d\le m$. Let
	\[
		\mathscr{O}(s,w)=\left\{(\sigma,i)\in\mathscr{T}_m\times I_m\mid\sigma s(i+j)=w(1+j),\forall j=0,1,2,\ldots,d-1\right\},
	\]
	denote $n:(\sigma,i)\in\mathscr{T}_m\times I_m\mapsto i\in I_m$, and define
	\[
		o_w(s)=\left\{\begin{array}{l}
				\text{min}\{n(x)\mid x\in\mathscr{O}(s,w)\},\text{ if $\mathscr{O}(s,w)\ne\emptyset$},\\
				\infty,\text{ otherwise}.
			\end{array}\right.
	\]
\end{Def}
\begin{Lemma}
	Let $s$, $s':I_m\to X$. If there exists $w:I_d\to X$ such that $o_w(s)\ne o_w(s')$, then $s$ and $s'$ cannot be mirrored.
\end{Lemma}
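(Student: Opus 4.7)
The plan is to prove the contrapositive: assuming $s$ and $s'$ are mirrored, I will show that $o_w(s)=o_w(s')$ for every $w\colon I_d\to S$. Since the lemma is symmetric in $s,s'$, it then suffices to prove one inequality and invoke symmetry for the other.

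First I would handle the finite case. Suppose $o_w(s)=i_0<\infty$; then by definition there exists a witness $(\sigma,i_0)\in\mathscr{O}(s,w)$, that is, some $\sigma\in\mathscr{T}_m$ with $\sigma s(i_0+j)=w(1+j)$ for $j=0,1,\ldots,d-1$. Because $s$ and $s'$ are mirrored, the definition provides $\sigma'\in\mathscr{T}_m$ with $\sigma s=\sigma' s'$ (as sequences, equivalently as functions $I_m\to S$). Evaluating at the indices $i_0,i_0+1,\ldots,i_0+d-1$ shows that $(\sigma',i_0)\in\mathscr{O}(s',w)$, and therefore $o_w(s')\le i_0 = o_w(s)$. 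The same argument with the roles of $s$ and $s'$ exchanged, using the other half of the mirrored condition (for every $\tau'\in\mathscr{T}_m$ there is $\tau\in\mathscr{T}_m$ with $\tau s = \tau' s'$), yields $o_w(s)\le o_w(s')$, hence equality.

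Second I would address the value $\infty$. If $o_w(s)=\infty$ but $o_w(s')<\infty$, the previous paragraph applied with $s$ and $s'$ swapped gives $o_w(s)\le o_w(s')<\infty$, a contradiction; so $o_w(s')=\infty$ as well. Combining the two cases yields $o_w(s)=o_w(s')$ for all $w$, which is exactly the contrapositive of the statement.

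There is no real obstacle here: the lemma is essentially a direct unpacking of the definition of mirrored sequences together with the definition of $o_w$, since the mirrored property is precisely what lets one transfer witnesses in $\mathscr{O}(s,w)$ to witnesses in $\mathscr{O}(s',w)$ with the \emph{same} starting index $i$. The subtlety, such as it is, lies only in remembering to treat the value $\infty$ separately and to invoke both halves of the mirrored definition to get the two inequalities.
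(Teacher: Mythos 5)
Your proposal is correct and is essentially the paper's argument, just phrased as a contrapositive rather than a contradiction: in both cases the key step is that the mirrored property lets you transfer a witness $(\sigma,i)\in\mathscr{O}(s,w)$ to a witness $(\sigma',i)\in\mathscr{O}(s',w)$ with the same index $i$, forcing $o_w(s')\le o_w(s)$ and, by symmetry, equality. Your explicit treatment of the $\infty$ case and of both halves of the mirrored definition is slightly more careful than the paper's one-line version, but it is the same proof.
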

\begin{proof}
	Assume $i=o_w(s)<o_w(s')$. In this case, there will exist $\sigma\in\mathscr{T}_m$ such that $\sigma s$ has $i$ entries, followed by the entries of $w$. However it is impossible to get such $\sigma'\in\mathscr{T}_m$ satisfying the same property, since in this case we would obtain $o_w(s')\le i$, and in particular, $\sigma' s'\ne\sigma s$, for all $\sigma'\in\mathscr{T}_m$.
\end{proof}
\begin{Ex}
	Let $s=(A,A,B,C,D)$ and $s'=(A,B,C,D,A)$. Then if $w=(A,A)$, we have $o_w(s)=1$ and $o_w(s')=4$, hence $s$ and $s'$ can not be mirrored. This can also be viewed directly: let $\sigma=id\in\mathscr{T}_m$, then there is no $\sigma'\in\mathscr{T}_m$ such that $\sigma' s'=\sigma s=s$.
\end{Ex}

\noindent\textbf{Notation.}\ 
\begin{enumerate}
	\renewcommand{\labelenumi}{(\roman{enumi})}
	\item Given $w_1:I_{d_1}\to X$ and $w_2:I_{d_2}\to X$, we denote by $w=(w_1,w_2)$ the sequence $w:I_{d_1+d_2}\to X$ defined by
		\[
			w(i)=\left\{\begin{array}{l}
				w_1(i),\text{ if $1\le i\le d_1$},\\
				w_2(i-d_1),\text{ if $i>d_1$}.
			\end{array}\right.
		\]
	\item Analogously we define $(w_1,w_2,\ldots,w_p)$.
	\item Given $A\in X$ and $d\in\mathbb{N}$, we denote by $A_d:I_d\to X$ the constant sequence $A_d(1)=\cdots=A_d(d)=A$.
\end{enumerate}

We focus now on the special case where $X=\{A,B\}$ has exactly two symbols.
\begin{Def}
	Let $s:I_m\to X=\{A,B\}$. Let $n_1\ge 0$ be the largest integer such that
	\[
		s(1)=s(2)=\cdots=s(n_1)=A,
	\]
	and, for this $n_1$, let $n_2>0$ be the largest integer such that
	\[
		s(n_1+1)=s(n_1+2)=\cdots=s(n_1+n_2)=B.
	\]
	Continuing this process, we obtain the sequence $\Sigma(s)=(n_1,n_2,\ldots,n_{2t-1},n_{2t})$ where we can have $n_1=0$ and we can have $n_{2t}=0$. We call it the spectrum sequence of $s$. 
\end{Def}
\begin{Def}
	Let $s:I_m\to X=\{A,B\}$. For $i\in\{1,2,\ldots,2t\}$ and $j\in\mathbb{N}\cup\{0\}$, let $\Sigma(s)(l)=0$, for $l\notin\{1,2,\ldots,2t\}$, and let
	\[
		e_i^{(j)}(s)=\left\{\begin{array}{l}
			\Sigma(s)(i),\text{ if $j=0$},\\
			\Sigma(s)(i+j)+\Sigma(s)(i-j),\text{ if $j>0$}.
		\end{array}\right.
	\]

	Furthermore we define $m_A^{(1)}(s)=\max\{\Sigma(s)(2i+1)\mid i=0,1,2,\ldots\}$, and  $I_A^{(1)}(s)=\{2i+1\mid\Sigma(s)(2i+1)=m_A^{(1)}(s)\}$, and inductively for $i>1$,
	\begin{align*}
		&m_A^{(i)}(s)=\left\{\begin{array}{l}\max\{e_l^{(i-1)}(s)\mid l\in I_A^{(i-1)}(s)\},\text{ if $I_A^{(i-1)}(s)\ne\emptyset$},\\0,\text{ otherwise}\end{array}\right.,\\
		&I_A^{(i)}(s)=\{l\in I_A^{(i-1)}(s)\mid e_l^{(i-1)}(s)=m_A^{(i)}(s)\text{ and }m_A^{(i)}(s)>0\}.
	\end{align*}
	We make analogous constructions for the symbol $B$.
\end{Def}

\begin{Lemma}
	Given $s$, $s':I_m\to X=\{A,B\}$, if there exists $i$ such that $m_A^{(i)}(s)\ne m_A^{(i)}(s')$ then $s$ and $s'$ cannot be mirrored.
\end{Lemma}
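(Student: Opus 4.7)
The plan is to apply the preceding lemma by exhibiting a word $w$ with $o_w(s)\neq o_w(s')$. I would first reduce to the case where $i$ is the smallest index for which $m_A^{(i)}(s)\neq m_A^{(i)}(s')$ (so $m_A^{(j)}(s)=m_A^{(j)}(s')$ for every $j<i$), and, after possibly swapping $s$ and $s'$, assume $m_A^{(i)}(s)>m_A^{(i)}(s')$. In particular $m_A^{(i)}(s)>0$, so $I_A^{(i)}(s)$ is non-empty and I may pick $l\in I_A^{(i)}(s)$. Setting $k_j=m_A^{(j)}(s)$ for $j=1,\ldots,i$, the proposed witness is the alternating word
\[
	w=(A_{k_1},B_{k_2},A_{k_3},B_{k_4},\ldots),
\]
of total length $k_1+\cdots+k_i$ and ending in $A$ or $B$ according to the parity of $i$.

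To show $o_w(s)=1$, I would use the characterization of $\mathscr{T}_m$ in which $\sigma^{-1}(\{1,\ldots,r\})$ is an interval for every $r$: equivalently, the sequence of positions $\sigma^{-1}(1),\sigma^{-1}(2),\ldots$ visited by $\sigma s$ starts somewhere and at each step extends the already-visited interval by one position to the left or to the right. Start the visiting sequence at any position inside the $l$-th $A$-block of $s$ and consume that block entirely, producing $A^{k_1}$; then extend into the adjacent $(l-1)$-th and $(l+1)$-th $B$-blocks to consume them fully, producing $B^{k_2}$; and in general at step $j$ consume both the $(l-j+1)$-th and $(l+j-1)$-th blocks in full. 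Since $I_A^{(i)}(s)\subseteq I_A^{(j)}(s)$ for every $j\le i$, the choice of $l$ yields $e_l^{(j-1)}(s)=k_j$ at each step, so the resulting $\sigma s$ begins with $w$ (and we extend arbitrarily afterwards to complete $\sigma$).

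The bulk of the work is showing $o_w(s')>1$. Suppose some $\sigma's'$ begins with $w$. The prefix $A^{k_1}$ forces $(\sigma')^{-1}(\{1,\ldots,k_1\})$ to be an interval of $A$-positions in $s'$, hence contained in an $A$-block of $s'$; by maximality $k_1=m_A^{(1)}(s')$, that block has length exactly $k_1$, and its index $l'$ belongs to $I_A^{(1)}(s')$. I would then prove by induction on $j$ that $l'\in I_A^{(j)}(s')$ for every $j\le i-1$: after the blocks of indices $l'-j+2,\ldots,l'+j-2$ have been fully consumed, the only way to extract $k_j$ consecutive symbols of the correct type is to extend exclusively into the adjacent $(l'-j+1)$-th and $(l'+j-1)$-th blocks, giving $k_j\le e_{l'}^{(j-1)}(s')$; combined with $l'\in I_A^{(j-1)}(s')$, which forces $e_{l'}^{(j-1)}(s')\le m_A^{(j)}(s')=k_j$, we obtain equality, hence $l'\in I_A^{(j)}(s')$, and moreover both adjacent blocks are fully consumed. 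Iterating up to $j=i$ then gives the contradiction $k_i\le e_{l'}^{(i-1)}(s')\le m_A^{(i)}(s')<k_i$.

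The main obstacle I foresee is verifying this inductive step for $s'$: one must rule out ``mixed'' scenarios in which at some intermediate stage the extension consumes only part of one of the two adjacent blocks and then manages to continue reading correct-type symbols from the other side. The equality $k_j=e_{l'}^{(j-1)}(s')$ forced at each step, together with the fact that the $k_j$ new positions split as $L\le\Sigma(s')(l'-j+1)$ from the left adjacent block and $R\le\Sigma(s')(l'+j-1)$ from the right, with $L+R=k_j=\Sigma(s')(l'-j+1)+\Sigma(s')(l'+j-1)$, forces both non-negative addends to saturate individually; this precludes partial consumption and makes the induction clean. Having shown $o_w(s)=1<o_w(s')$, the preceding lemma delivers the conclusion.
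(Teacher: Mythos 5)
Your proposal is correct and follows essentially the same route as the paper: the paper's proof consists precisely of taking the minimal such $i$, forming the alternating witness word $w=(A_{n_1},B_{n_2},\ldots,C_{(n_i)})$ with $n_j=m_A^{(j)}(s)$, and invoking the preceding lemma on $o_w$. Your write-up supplies the interval-growing details the paper omits, and in fact establishes the (sufficient and more easily justified) inequality $o_w(s')>1$ where the paper simply asserts $o_w(s')=\infty$.
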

\begin{proof}
	Assume that $i\in\mathbb{N}$ is such that $m_A^{(j)}(s)=m_A^{(j)}(s')$, for $j<i$, and $m_A^{(i)}(s)>m_A^{(i)}(s')$. Let
	\[
		w=(A_{n_1},B_{n_2},\ldots,C_{(n_i)}),
	\]
	where $n_j=m_A^{(j)}(s)$, for all $j=1,2,\ldots,i$, $C=A$ if $i$ is odd and $C=B$ if $i$ is even. Then $o_w(s)=1\ne\infty=o_w(s')$, hence $s$ and $s'$ are not mirrored.
\end{proof}
An intuitive approach for the numbers $m_A^{(i)}(s)$ is the following. The greatest sequence of A's is $m_A^{(1)}(s)$. Along these maximum number $m_A^{(1)}(s)$, we see the number of B's after and before the sequence of A's, and the greatest number of B's is denoted by $m_A^{(2)}(s)$. We continue inductively.

\begin{Ex}
	Let $s=(A_3,B_3,A_3,B_1)$ and $s'=(A_3,B_1,A_3,B_3)$. Then $m_A^{(1)}(s)=m_A^{(1)}(s')=3$, $m_A^{(2)}(s)=m_A^{(2)}(s')=4$, $m_A^{(3)}(s)=m_A^{(3)}(s')=3$, $m_A^{(j)}(s)=m_A^{(j)}(s')=0,\forall j>3$. But we have
	\[
		m_B^{(2)}(s)=6\ne3=m_B^{(2)}(s'),
	\]
	hence $s$ and $s'$ are not mirrored.
\end{Ex}

Let us consider again $s:I_m\to X=\{A,B\}$, and the spectrum sequence $\Sigma(s)=(n_1,n_2,\ldots,n_{2t-1},n_{2t})$. Note that
\[
	I_A^{(1)}(s)\supset I_A^{(2)}(s)\supset\cdots,
\]
and that there is $n\in\mathbb{N}$ such that $I_A^{(n)}(s)\ne\emptyset$ and $I_A^{(j)
}(s)=\emptyset$ for every $j>n$. In this case, if $i\in I_A^{(n)}(s)$ then this entry satisfies the following condition: $e_i^{(n-1)}$ is either the first non-zero entry of $\Sigma(s)$, or the last non-zero entry of $\Sigma(s)$ or either the sum of the first and the last non-zero entries of $\Sigma(s)$. Note that the last possibility happens if and only if $A$ appears ``in the middle" if we consider the spectrum sequence $\Sigma(s)$.

Now write $\Sigma(s)=(n_1,n_2,\ldots,n_{2t-1},n_{2t})$ and assume $n_1>0$ (we can do it renaming $A$ and $B$, if necessary). Also, define (the ``last entry")
\[
	n_l=\left\{\begin{array}{l}
		n_{2t-1},\text{ if $n_{2t}=0$},\\
		0,\text{ otherwise}.
	\end{array}\right.
\]
We use the analogous notation for a given $s':I_m\to X=\{A,B\}$. Assume that $s$ and $s'$ are mirrored. Then $s'=\sigma s$ for some $\sigma\in\mathscr{T}_m$, and $s=\sigma's'$ for some $\sigma'\in\mathscr{T}_m$. Since $\sigma^{-1}(m)$, $\sigma^{\prime-1}\in\{1,m\}$, we can assume that (changing $s'$ with $\text{rev}\,s'$ if necessary) $n_1'>0$.

Since $s$ and $s'$ are mirrored, we can look at the permutations in $\mathscr{T}_{m-n_1-n_l}\circ\tau_{m-n_l}$ and conclude that necessarily $n_1+n_l=n_1'+n_l'$. Moreover, since $m_C^{(i)}(s)=m_C^{(i)}(s')$ for all $i$ and for all $C\in\{A,B\}$, we obtain necessarily (by the observation above) $n_i=n_j'$, for at least one pair of $i$, $j\in\{1,l\}$. As a consequence, using these two equations, we obtain the following
\begin{Lemma}
	If $s$, $s':I_m\to X=\{A,B\}$ are mirrored then there is a coincidence in $(s,s')$ or in $(s,\text{rev}\,s')$.
\end{Lemma}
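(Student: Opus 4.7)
The plan is to exploit the two structural identities derived immediately before the lemma's statement---the sum equation $n_1+n_l=n_1'+n_l'$ and the matching relation $n_i=n_j'$ for some pair $i,j\in\{1,l\}$---and run a short case analysis on the matching. The preceding WLOG reductions (renaming the symbols so that $n_1>0$, and, if necessary, replacing $s'$ by $\text{rev}\,s'$ so that $n_1'>0$) already account for the ``or in $(s,\text{rev}\,s')$'' clause in the conclusion, so my task reduces to verifying the two defining conditions of a coincidence against the spectra of $s$ and $s'$.

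Combining the sum equation with the matching relation collapses the four possible pairs from $n_i=n_j'$ into two essentially distinct situations. \emph{Case A}: $n_1=n_1'$, which via the sum equation forces $n_l=n_l'$. Here I claim $(m_1,m_2):=(n_1,n_l)$ is a coincidence of $(s,s')$. By the definition of the spectrum, both $s$ and $s'$ begin with an $A$-block of length exactly $n_1$, giving $s(i)=s'(i)=A$ for $i\leq m_1$ and $s(m_1+1),s'(m_1+1)\neq A$. If $n_l>0$, the symmetric check on the right end works identically. If $n_l=0$, the constant-$A$ clause on the right is vacuous and, since $n_l=n_l'=0$ forces the terminal entries of both $\Sigma(s)$ and $\Sigma(s')$ to be positive, both sequences end with a non-$A$ entry, which gives the remaining half of condition (2).

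\emph{Case B}: $n_1=n_l'$, which forces $n_l=n_1'$. Since $n_1>0$, both $n_l'$ and $n_l$ are strictly positive, so $s$ and $s'$ each end with a nonempty $A$-block. Consequently $\text{rev}\,s'$ begins with an $A$-block of length $n_l'=n_1$ and ends with an $A$-block of length $n_1'=n_l$, each bounded by a non-$A$ entry (the one that in $s'$ delimited the corresponding original block). The same spectrum-level bookkeeping as in Case A then identifies $(n_1,n_l)$ as a coincidence of $(s,\text{rev}\,s')$.

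The main---and really only---subtle point is the handling of the boundary case $n_l=0$ in Case A, and it is painlessly absorbed into the vacuous second clause of the coincidence definition once one notes that $n_l=n_l'=0$ forces both sequences to end with a non-$A$ symbol. Beyond that, the argument is a direct translation of the two equations into the two conditions defining a coincidence, so no further intricate step is expected.
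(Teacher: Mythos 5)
Your proposal is correct and follows the paper's own route: the paper derives exactly the two identities $n_1+n_l=n_1'+n_l'$ and $n_i=n_j'$ for some $i,j\in\{1,l\}$ in the paragraph immediately preceding the lemma and then asserts the conclusion ``as a consequence,'' leaving the case analysis unwritten. Your two-case split (together with the check that $n_l=n_l'=0$ still yields the boundary condition $s(m),s'(m)\ne A$) is precisely that omitted step, carried out correctly.
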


Now we use induction in order to prove the following
\begin{Prop}\label{ME_twoelements}
	Let $s$, $s':I_m\to X=\{A,B\}$ be mirrored. Then $s=s'$ or $s=\text{rev}\,s'$.
\end{Prop}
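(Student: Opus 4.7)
The plan is to proceed by induction on $m$. The base case $m=1$ is trivial: $\mathscr{T}_1$ is the trivial group, so two mirrored sequences must coincide, and a singleton equals its reverse.

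For the inductive step, the immediately preceding lemma furnishes a coincidence either in $(s,s')$ or in $(s,\mathrm{rev}\,s')$. Since $(s,\mathrm{rev}\,s')$ is mirrored whenever $(s,s')$ is (second example of this section), we may replace $s'$ by $\mathrm{rev}\,s'$ if necessary and assume the coincidence $(m_1,m_2)$ is a coincidence of $(s,s')$ itself, with $m_1\ge 1$. Lemma \ref{nice_lemma} then yields that the restrictions $s_0:=s|_{I_m'}$ and $s_0':=s'|_{I_m'}$ are mirrored sequences of length $m-m_1-m_2<m$. By induction $s_0=s_0'$ or $s_0=\mathrm{rev}\,s_0'$. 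If $s_0=s_0'$, then since $s$ and $s'$ agree on the padding $I_m^{(-)}$ (both constantly $A$) and on $I_m'$, we obtain $s=s'$.

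The crux of the argument is the remaining case $s_0=\mathrm{rev}\,s_0'$ with $s_0\neq s_0'$. Writing $s=(A_{m_1},s_0,A_{m_2})$ and $\mathrm{rev}\,s'=(A_{m_2},\mathrm{rev}\,s_0',A_{m_1})=(A_{m_2},s_0,A_{m_1})$, the target identity $s=\mathrm{rev}\,s'$ reduces to $m_1=m_2$. Establishing this padding symmetry is the main obstacle. I would argue it by contradiction: assuming $m_1\neq m_2$, one exhibits a window $w$ with $o_w(s)\neq o_w(s')$ and invokes the earlier $o_w$-invariant lemma to contradict the mirror property. A natural candidate for $w$ combines the longer padding block $A_{\max(m_1,m_2)}$ with a non-palindromic prefix or suffix of $s_0$; the description of $\mathscr{T}_m$ as prefix-reversal compositions (Corollary \ref{prod_equiv}) shows that such a pattern can be realized from $s$ by some $\sigma\in\mathscr{T}_m$ (applying a suitable sequence of prefix reversals to push the longer padding past the non-palindromic interior) but not from $s'$ by any $\sigma'\in\mathscr{T}_m$, since matching the extremal padding from $s'$ forces a simultaneous reversal of the non-palindromic internal block that the prefix-reversal structure cannot achieve. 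Alternatively, one iterates the invariants $m_A^{(i)}$ and $m_B^{(i)}$ from earlier in the section until the peeling procedure reaches the padding values at mismatched recursion depths and forces a discrepancy in some $m_C^{(i)}$. Once $m_1=m_2$ is established, $s=\mathrm{rev}\,s'$ follows and the induction closes.
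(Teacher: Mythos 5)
Your scaffolding coincides with the paper's: induction on $m$, the preceding lemma to obtain a coincidence in $(s,s')$ after possibly replacing $s'$ by $\text{rev}\,s'$, Lemma \ref{nice_lemma} to pass to the mirrored restrictions $s_0$, $s_0'$, and the induction hypothesis giving $s_0=s_0'$ or $s_0=\text{rev}\,s_0'$. The first case is handled correctly, and your reduction of the second case to the single equality $m_1=m_2$ is also correct: since $s_0$ begins and ends with the symbol $B$, the identity $(A_{m_1},s_0,A_{m_2})=(A_{m_2},s_0,A_{m_1})$ indeed forces $m_1=m_2$, after which $s=\text{rev}\,s'$.

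The problem is that at exactly that point --- which you yourself label ``the main obstacle'' --- the argument stops being a proof. You offer two candidate strategies and carry out neither. The first rests entirely on the unproved assertion that a suitable window $w$ (never actually specified) is realizable from $s$ by some $\sigma\in\mathscr{T}_m$ but not from $s'$; the sentence ``matching the extremal padding from $s'$ forces a simultaneous reversal of the non-palindromic internal block that the prefix-reversal structure cannot achieve'' is precisely the statement that needs an argument, and none is given. The second alternative is in substance what the paper does: when $s_0=\text{rev}\,s_0'$ one has $\Sigma(s)=(n_1,n_2,\ldots,n_{2t})$ and $\Sigma(s')=(n_1,n_{2t-1},\ldots,n_2,n_{2t})$, and the equalities $m_C^{(i)}(s)=m_C^{(i)}(s')$ for all $i$ and both symbols $C$ are then used to force $s=s'$ or $s=\text{rev}\,s'$; but you only name this route, you do not execute it. To see that something genuinely must be done here, take $s=(A,B,B,A,B)$ and $s'=(A,B,A,B,B)$: then $m_1=1$, $m_2=0$, $s_0=\text{rev}\,s_0'\ne s_0'$, and the pair is separated only by the computation $m_B^{(2)}(s)=2\ne 1=m_B^{(2)}(s')$. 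The conclusion you want is true, but the invariant computation (or an explicit window $w$ with $o_w(s)\ne o_w(s')$) is the entire content of the decisive case, and it is missing from your write-up.
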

\begin{proof}
	We can assume that $(m_1,m_2)$ is a coincidence for $(s,s')$, by the previous lemma, changing $s'$ to $\text{rev}\,s'$, if necessary. Moreover we can consider $s_0=s |_{I_m'}$ and $s_0'=|_{I_m'}$, which are mirrored, by Lemma \ref{nice_lemma}. By the induction hypothesis, we obtain $s_0=s_0'$ or $s_0=\text{rev}\,s_0'$. The last one implies the following fact: if $\Sigma(s)=(n_1,n_2,\ldots,n_{2t-1},n_{2t})$ then $\Sigma(s')=(n_1,n_{2t-1},\ldots,n_2,n_{2t})$. Since $m_C^{(i)}(s)=m_C^{(i)}(s')$ for all $i$ and for all $C$, we obtain necessarily $s=s'$ or $s=\text{rev}\,s'$.
\end{proof}

Below we consider when the general case can be reduced to that of 2 symbols.
\begin{Def}
	Let $s:I_m\to X$ where $X$ is any set, let $X_0\subset X$ and let $R$ be any symbol (it can be an element of $X$ or not). We define the function
	\[
		\pi_{X_0,R}(s):I_m\to X\cup\{R\}
	\]
	by
	\[
		\pi_{X_0,R}(s)(i)=\left\{\begin{array}{l}
			s(i),\text{ if $s(i)\in X_0$},\\
			R,\text{ otherwise}.
		\end{array}\right.
	\]
\end{Def}
Note that for any $\sigma\in \mathcal{S}_m$, $\pi_{X_0,R}(\sigma s)=\sigma\pi_{X_0,R}(s)$. In particular, if $s$ and $s'$ are mirrored, then so are $\pi_{X_0,R}(s)$ and $\pi_{X_0,R}(s')$, for any choice of $X_0\subset X$ and $R$.

\begin{Ex}
	Let $s=(A,B,C,D,C)$ and $s'=(A,D,C,B,C)$. It is easy to see that $s\ne s'$ and $s\ne\text{rev}\,s'$, and $s$ and $s'$ are not mirrored. On the other hand for every choice of $x, y\in\{A,B,C,D\}$ we have $\pi(s)=\pi(s')$ or $\pi(s)=\text{rev}\,\pi(s')$ where $\pi=\pi_{\{x\},y}$.
\end{Ex}

The previous example shows that we cannot always reduce to the case where $X$ has two elements.
\begin{Def}
	Let $s$, $s':I_m\to X$ where $X$ is any set. An element $A\in\text{Im}\,s$ is called:
	\begin{enumerate}
		\renewcommand{\labelenumi}{(\roman{enumi})}
		\item direct for the pair $(s,s')$ if for all $i\in s^{-1}(A)$, $s'(i)=A$,
		\item reverse for the pair $(s,s')$ if for all $i\in s^{-1}(A)$, $\text{rev}\,s'(i)=A$.
	\end{enumerate}
\end{Def}
\begin{Ex}
	Let $s, s':I_m\to X$.
	\begin{enumerate}
		\item If each $A\in\text{Im}\,s$ is direct for $(s,s')$ then $s=s'$.
		\item If each $A\in\text{Im}\,s$ is reverse for $(s,s')$ then $s=\text{rev}\,s'$.
		\item The converses of (1) and (2) hold. That is, if $s=s'$ ($s=\text{rev}\,s'$, respectively), then each $A\in\text{Im}\,s$ is direct (reverse, respectively) for $(s,s')$.
	\end{enumerate}
\end{Ex}
The motivation for the previous definition is as follows.
\begin{Ex}
	Let $s, s':I_m\to X$. If there exists an $A\in\text{Im}\,s$ that is neither direct nor reverse for $(s,s')$ then $s$ and $s'$ are not mirrored. 
	
In this case we can make a reduction: letting $\pi=\pi_{\{A\},B}$, then $s_0=\pi(s)$ and $s_0'=\pi(s')$, we have $s_0\ne s_0'$. The last statement holds since there is an $i$ such that $s(i)=A\ne s'(i)$, hence $\pi(s)(i)=A\ne B=\pi(s')(i)$. By a similar argument, $s_0\ne\text{rev}\,s_0'$. In particular, $s_0$ and $s_0'$ are not mirrored by Proposition \ref{ME_twoelements}, and this implies that $s$ and $s'$ are not mirrored.
\end{Ex}

Now, we focus on the case where the reduction to the case of two elements is not possible.
\begin{Def}
	Let $s, s':I_m\to X$ where $X$ is any set. We say that $(s,s')$ is a special pair if every $A\in\text{Im}\,s$ is either direct or reverse for $(s,s')$.
\end{Def}
It is easy to construct examples of special pairs. A particular consequence of the previous examples is the following:
\begin{Lemma}\label{mirror_special}
	If $s, s':I_m\to X$ are mirrored then $(s,s')$ is a special pair.
\end{Lemma}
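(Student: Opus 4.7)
The plan is to prove the contrapositive: if $(s,s')$ is not a special pair then $s$ and $s'$ are not mirrored. This is essentially what the preceding example asserts informally; the proof just needs to formalize that reduction.

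Suppose there exists some $A \in \text{Im}\,s$ that is neither direct nor reverse for $(s,s')$. Choose any symbol $B \ne A$ (if $S$ is a singleton there is nothing to prove, as $s = s'$ automatically), and set $\pi = \pi_{\{A\}, B}$, $s_0 = \pi(s)$, $s_0' = \pi(s')$. These are sequences $I_m \to \{A, B\}$. The goal is to show $s_0$ and $s_0'$ fail to be mirrored and then transport this failure back to $(s, s')$.

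I would verify $s_0 \ne s_0'$ and $s_0 \ne \text{rev}\,s_0'$ directly from the hypothesis. Since $A$ is not direct, there is an index $i$ with $s(i) = A$ and $s'(i) \ne A$, giving $s_0(i) = A \ne B = s_0'(i)$. Since $A$ is not reverse, there is $j$ with $s(j) = A$ and $(\text{rev}\,s')(j) \ne A$; because $\pi$ acts pointwise and $\text{rev} = \tau_m$ permutes entries, $\pi$ commutes with $\text{rev}$, so $(\text{rev}\,s_0')(j) = B \ne A = s_0(j)$. Thus $s_0 \ne s_0'$ and $s_0 \ne \text{rev}\,s_0'$, and Proposition \ref{ME_twoelements} implies that $s_0$ and $s_0'$ are not mirrored.

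To conclude, I invoke the observation made just after the definition of $\pi_{S_0, R}$: since $\pi(\sigma s) = \sigma \pi(s)$ for every $\sigma \in \mathcal{S}_m$, mirroring is preserved by $\pi$. So if $s$ and $s'$ were mirrored, $s_0$ and $s_0'$ would be mirrored too, contradicting the previous step. Therefore $s$ and $s'$ are not mirrored, which is exactly the contrapositive of what we had to show. There is no real obstacle here; the only care needed is the remark that $\pi$ and $\text{rev}$ commute, and the explicit appeal to Proposition \ref{ME_twoelements} to handle the two-symbol case.
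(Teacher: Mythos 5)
Your proof is correct and is essentially the argument the paper gives (in the example immediately preceding the lemma): pass to the contrapositive, project onto two symbols via $\pi_{\{A\},B}$, check $s_0\ne s_0'$ and $s_0\ne\text{rev}\,s_0'$, and invoke Proposition \ref{ME_twoelements} together with the fact that $\pi$ commutes with the $\mathcal{S}_m$-action. Your explicit remark that $\pi$ commutes with $\text{rev}$ is a small but welcome addition of rigor over the paper's ``by a similar argument.''
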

\begin{Ex}
	Let $(s,s')$ be a special pair, and assume that $A=s(1)$ is direct for $(s,s')$. Then there is a coincidence in $(s,s')$.
\end{Ex}
\begin{Ex}
	Let $(s,s')$ be a special pair. For every $i=1$, 2, \dots, $m$, let $t_i=(1\quad\tau_m(i))=(i\quad m-i+1)$ be a transposition. Let $A=s(i)$, if $s(i)\ne s'(i)$ then $A$ is not direct, and necessarily $A$ is reverse hence $s(i)=t_is'(i)=s'(m-i+1)$. Let $B=s(m-i+1)$. There are two possibilities: $B$ is direct, which implies $A=B$ or $B$ is reverse, and so $s'(i)=B$. In particular, $(s,s')$ is a special pair if and only if $(s',s)$ is a special pair.
\end{Ex}
\begin{Ex}
	Let $(s,s')$ be a special pair. Then $(\text{rev}\,s,s')$ and $(s,\text{rev}\,s')$ are special pairs. Moreover, $A$ is direct (reverse, respectively) for $(s,s')$ if and only if $A$ is reverse (direct, respectively) for $(s,\text{rev}\,s')$. Analogously for $(\text{rev}\,s,s')$.
\end{Ex}

\begin{Lemma}
	Let $s:I_m\to X$ be such that $(s,s')$ is a special pair where $s'=\tau_{m-1}s$. Then $s=s'$.
\end{Lemma}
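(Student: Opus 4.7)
The plan is to decode what the conditions ``direct'' and ``reverse'' mean concretely for the particular $s'=\tau_{m-1}s$, and then observe that the ``reverse'' condition is so restrictive that it can only be satisfied by a constant function. First, I would record the explicit formulas: since $\tau_{m-1}$ reverses positions $1,\ldots,m-1$ and fixes $m$, we get $s'(i)=s(m-i)$ for $1\le i\le m-1$ and $s'(m)=s(m)$. Consequently $\mathrm{rev}\,s'(i)=s'(m-i+1)$ reduces to $s(i-1)$ when $i\ge 2$ and to $s(m)$ when $i=1$.

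Next I would analyze the reverse condition on a given $A\in\mathrm{Im}\,s$. Unpacking the definition with the formulas above, $A$ is reverse for $(s,s')$ iff
\begin{itemize}
    \item $s(i)=A\Rightarrow s(i-1)=A$ for all $2\le i\le m$, and
    \item $s(1)=A\Rightarrow s(m)=A$.
\end{itemize}
The first bullet forces $s^{-1}(A)$ to be downward-closed in $I_m$, so it equals some initial segment $\{1,\ldots,k\}$. The second bullet says that if this segment contains $1$ then it must also contain $m$, forcing $k=m$. Hence $s^{-1}(A)$ is either empty or all of $I_m$, i.e., the only way an element of $\mathrm{Im}\,s$ can be reverse for $(s,s')$ is if $s$ is identically equal to it.

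With this dichotomy in place the proof splits into two clean cases. If $s$ is constant, then $\tau_{m-1}s=s$, so $s=s'$ is immediate. Otherwise, no $A\in\mathrm{Im}\,s$ can be reverse (it would force $s$ to be the constant $A$); since $(s,s')$ is special by hypothesis, every $A\in\mathrm{Im}\,s$ is direct. Applying the direct condition at each position $i$ with $A=s(i)$ yields $s'(i)=A=s(i)$, so $s=s'$.

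The only delicate step is the transcription in the reverse case, where one must keep track of the boundary $i=1$ separately from $i\ge 2$ because $s'$ agrees with $s$ at the last coordinate while reversing the others; everything else is bookkeeping.
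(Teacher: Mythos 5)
Your proof is correct, and it rests on the same key observation as the paper's (sketched) proof: the reverse condition for a symbol, combined with $s'=\tau_{m-1}s$, propagates through the sequence and forces $s$ to be constant, while direct symbols give $s'(i)=s(i)$ pointwise. Your write-up is in fact more complete than the paper's sketch, which anchors the case analysis only at $A=s(1)$ and leaves the inductive continuation implicit; your global dichotomy ($s$ constant, versus no symbol reverse and hence every symbol direct) closes that gap cleanly.
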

\begin{proof}
(Sketch) Let $A=s(1)$. If $A$ is direct for $(s,s')$ then $s(1)=s'(1)=A$, and $s(m-1)=\tau_{m-1}s(1)=s'(1)=A$. Hence, since $A$ is direct, $s'(m-1)=A$ and we can proceed the argument inductively.

If $A$ is reverse, then $A=s'(m)=s(m)$ hence $s'(1)=A$. So $s(m-1)=\tau_{m-1}s(1)=s'(1)=A$, hence $s'(2)=A$, which implies $s(m-2)=A$. Continuing the process, the lemma is proved.
\end{proof}
Note that a similar argument can be used to prove the following:
\begin{Lemma}\label{end_lemma}
	Let $(s,s')$ be a special pair, assume $(m_1,m_2)$ a coincidence of $(s,s')$ and let $s_0=s\mid_{I_m'},s_0'=s'\mid_{I_m'}$. If $s_0=\text{rev}'\,s_0'$ then $s=s'$.
\end{Lemma}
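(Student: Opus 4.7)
The plan is to follow the outside-in propagation strategy used in the sketched proof of the preceding lemma (the $s'=\tau_{m-1}s$ case), now with the relation $s_0=\text{rev}\,s_0'$ in place of $s'=\tau_{m-1}s$. First I would translate the hypothesis into the pointwise identity
\[
s'(i) = s\bigl(m + m_1 - m_2 - i + 1\bigr)\quad\text{for every } i\in I_m',
\]
and write $\psi(i) = m + m_1 - m_2 - i + 1$ for the corresponding involution of $I_m'$. The coincidence already gives $s(i)=s'(i)=A$ on the outer blocks $\{1,\dots,m_1\}\cup\{m-m_2+1,\dots,m\}$, so only the middle remains to be handled.

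Next, for each $C\in\mathrm{Im}\,s$ distinct from $A$, the special-pair hypothesis says $C$ is direct or reverse. If $C$ is direct and $s(i)=C$, combining with the displayed identity gives $s(\psi(i))=s'(i)=C$: so $s$ itself carries $C$ at two positions symmetric under $\psi$. If $C$ is reverse and $s(i)=C$, then $s'(m-i+1)=C$; because $s'$ equals $A$ on its outer block, this forces $m-i+1\in I_m'$ (otherwise $C=A$, contradiction), and inside that range the identity rewrites as $s(m_1-m_2+i)=C$. Thus reverse occurrences of $C$ are confined to an inner window and come with a forced partner position in $s$ as well.

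With these two propagation rules in hand, I would process the middle positions from the endpoints inward. Starting from $i=m_1+1$, the rule applicable to $C=s(m_1+1)$ pins down a partner position of $s$; by construction $s$ and $s'$ must agree at both positions, reducing the remaining problem to a shorter middle on which the restricted pair is still special and still satisfies a reversed-middle relation. An induction on the middle length, using Lemma \ref{nice_lemma} to reduce to the inner middle at each step, concludes the matching, and combining with the outer-block agreement gives $s=s'$.

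The main obstacle will be the bookkeeping in the reverse case when $m_1\ne m_2$: the shift $i\mapsto m_1-m_2+i$ is not an involution, so iterating it may, a priori, escape $I_m'$. The key technical point is that any such escape would place a value $C\ne A$ at a position where $s$ or $s'$ is known to be $A$, a contradiction. Once this rigidity is established, the forced symmetric placement of middle symbols immediately promotes the middle equality to $s_0=s_0'$ and hence $s=s'$.
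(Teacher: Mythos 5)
Your overall strategy (propagating the direct/reverse dichotomy from the outer $A$-blocks into the middle via the pointwise identity $s'(i)=s(\psi(i))$ on $I_m'$, with $\psi(i)=m+m_1-m_2+1-i$) is exactly the ``similar argument'' the paper intends, and your treatment of the case $m_1\ne m_2$ is sound: a reverse occurrence of $C\ne A$ at $i\in I_m'$ forces $s(i+m_1-m_2)=C$, the iterates drift monotonically out of the window $\{m_2+1,\dots,m-m_1\}$, and the first escape puts $C$ at a position $m+1-j$ lying in an outer block of $s'$, where $s'$ equals $A$ --- a contradiction. Hence for $m_1\ne m_2$ every symbol occurring in the middle is direct, and $A$ itself is direct (if $A$ were reverse, the outer blocks would force $m_1\le m_2$ and $m_2\le m_1$ simultaneously), so $s=s'$.

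The gap is the case $m_1=m_2$, which your proposal passes over: there the shift $i\mapsto i+m_1-m_2$ is the identity, the reverse-propagation is vacuous, and the claimed promotion of ``forced symmetric placement'' to $s_0=s_0'$ simply does not happen. In fact the statement as written is false in that case: take $s=(A,B,C,A)$ and $s'=(A,C,B,A)$. Then $(1,1)$ is a coincidence of $(s,s')$, $s_0=(B,C)=\mathrm{rev}\,s_0'$, and $(s,s')$ is a special pair ($A$ is direct, $B$ and $C$ are reverse), yet $s\ne s'$; one only has $s=\mathrm{rev}\,s'$. So no completion of your argument can reach the literal conclusion $s=s'$; the lemma's conclusion must be weakened to ``$s=s'$ or $s=\mathrm{rev}\,s'$'', which is all that the Theorem's proof actually requires. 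With that correction your argument does close: when $m_1=m_2$ the map $\psi$ is the global reversal $i\mapsto m+1-i$ and the outer blocks are symmetric, so the hypothesis $s_0=\mathrm{rev}\,s_0'$ already gives $s=\mathrm{rev}\,s'$ outright, without any appeal to specialness; when $m_1\ne m_2$ your escape argument gives $s=s'$.
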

We are in a position to prove the main theorem.
\begin{proof}[Proof of Theorem \ref{mainthm}]
	Assume $s$ and $s'$ mirrored. Then $(s,s')$ is a special pair, by Lemma \ref{mirror_special}, and we can change $s'$ to $\text{rev}\,s'$ if necessary, to guarantee that there is a coincidence in $(s,s')$.

	Now $s_0=s\mid_{I_m'}$ and $s_0'=\mid_{I_m'}$ are mirrored, by Lemma \ref{nice_lemma}, and by the induction hypothesis, $s_0=s_0'$ or $s_0=\text{rev}'\,s_0'$. Both cases imply $s=s'$, by Lemma \ref{end_lemma}, proving the theorem.
\end{proof}

\section*{Acknowledgment.}
	The authors are grateful to Prof. Plamen Koshlukov for reading the manuscript, and for the cordial and useful discussions.% The first author is grateful to the Department of Mathematics at Princeton University for its hospitality during his visit in the academic year 2017-2018. The second author is grateful to the Department of Mathematics and Statistics of Memorial University of Newfoundland on the occasion of his visit as a graduate student.


\begin{thebibliography}{2}
\bibitem{BL1992}D. Blessenohl, and H. Laue, \emph{Algebraic combinatorics related to the free Lie algebra.} S\'eminaire Lotharingien de Combinatoire (Thurnau, 1992), Pr\'epubl. Inst. Rech. Math. Av 1993 (1992), 1--21.
%\bibitem{VinKoVa2004} O. Di Vincenzo, P. Koshlukov, A. Valenti, \emph{Gradings on the algebra of upper triangular matrices and their graded identities}, Journal of Algebra \textbf{275}, 550--566.
%\bibitem{EK2013} A. Elduque, M. Kochetov, \emph{Gradings on simple Lie algebras}, Mathematical Surveys and Monographs, 189. American Mathematical Society (2013).
\bibitem{G}A. M. Garsia, \emph{Combinatorics of the free Lie algebra and the symmetric group.} Analysis, et cetera (1990), 309--382.
%\bibitem{KY2017a} P. Koshlukov, F. Yasumura, \emph{Elementary gradings on the Lie algebra $UT_n^{(-)}$}, Journal of Algebra \textbf{473} (2017), 66--79.
%\bibitem{KY2017b} P. Koshlukov, F. Yasumura, \emph{Group gradings on the Lie algebra of upper triangular matrices}, Journal of Algebra \textbf{477} (2017), 294--311.
%\bibitem{VZ2007} A. Valenti, M. Zaicev, \emph{Group gradings on upper triangular matrices}, Archiv der Mathematik \textbf{89} (2007), 33--40.
\end{thebibliography}
\end{document}